\newtheorem{theorem}{Theorem}[section]
\newtheorem{lemma}[theorem]{Lemma}
\theoremstyle{definition}
\newtheorem{example}[theorem]{Example}
\newtheorem{proposition}[theorem]{Proposition}
\newtheorem{corollary}[theorem]{Corollary}
\theoremstyle{remark}
\numberwithin{equation}{section}
\begin{document}

\title{ Variation of the  first   eigenvalue of $(p,q)$-Laplacian along the  Ricci-harmonic flow }

\author{Shahroud Azami}
\address{Department of Mathematics, Faculty of Sciences,
Imam Khomeini International University, Qazvin, Iran.
}

\email{azami@sci.ikiu.ac.ir,\,\, shahrood78@yahoo.com }


\subjclass[2010]{58C40; 53C43; 53C44}


\keywords{Laplace, Ricci flow, Harmonic map }

\begin{abstract}
In this paper, we study monotonicity for the first eigenvalue of a class of $(p,q)$-Laplacian. We find the first variation formula for the first eigenvalue
of $(p,q)$-Laplacian on a closed Riemannian manifold evolving by the Ricci-harmonic flow and construct various monotic quantities by imposing some conditions on initial manifold.
\end{abstract}
\maketitle
\section{Introduction}
On the other hand, the study on eigenvalue problem has received remarkable attention.  Recently, many mathematicians  considered the eigenvalue problem of geometric operators under various geometric flows, because it is a very powerful tool for the understanding Riemannain manifold. The fundamental study of this works began  when Perelman \cite{GP} showed that the functional
\begin{equation*}
F=\int_{M} (R+|\nabla f|^{2}) e^{-f}\,d\mu
\end{equation*}
is nondecreasing along the Ricci flow coupled to a backward heat-type equation, where $R$ is the scalar curvature with respect to the metric  $g(t)$ and $d\mu$ denotes the volume form  of the metric  $g(t)$.
The nondecreasing of the functional $F$ implies  that the first  eigenvalue of the geometric  operator $-4\Delta+R$ is nondecreasing under the Ricci flow.
Then, Li \cite{JFL} and Zeng with et'al \cite{FZ} extended the geometric operator $-4\Delta+R$ to the operator $-\Delta+cR$ and  studied the monotonicity of eigenvalues of the operator $-\Delta+cR$ along Ricci flow and the Ricci-Bourguignon flow, respectively.

Also,  in    \cite{AA, JY,  LZ} has been investigated the evolution for the first eigenvalue of $p$-Laplacian along the Ricci-harmonic flow, Ricci flow and  mth mean curvature flow, respectively. A generalization of $p$-Laplacian is a class of $(p,q)$-Laplacian  which has applications in physic  and related sciences such as non-Newtonnian fluids, pseudoplastics \cite{JD, AEL} and we introduce it in later section.

On the other hand, geometric flows for instance, Ricci-harmonic flow  have been a topic of active research interest in mathematics and physics. A geometric flow  is an evolution of a geometric structure. Let $M$ be a closed $m$-dimensional Riemannian manifold with a Riemannian metric $g_{0}$. Hamilton for the first time in $1982$ introduced  the Ricci flow  as follows
\begin{equation*}
\frac{\partial g(t)}{\partial t}=-2Ric(g(t)),\,\,\,\,\,\,\,\,\,g(0)=g_{0},
\end{equation*}
where $Ric$ is the Ricci tensor of $g(t)$. The Ricci flow has been proved to be a very useful tool to improve metrics in Riemannian geometry, when $M$ is  compact. Now, let $(M^{m}, g)$ and $(N^{n},\gamma)$ be closed Riemannain manifolds. By Nash's embedding theorem, assume that  $N$ is isometrically embedded into  Euclidean space $e_{N}:(N^{n},\gamma)\hookrightarrow \mathbb{R}^{n+1}$ for a sufficiently large $d$. We identify maps $\phi: M\to N$ with $e_{N}\circ \phi:M\to \mathbb{R}^{d}$. M\"{u}ller \cite{RM} considered a generalization of Ricci flow  as
\begin{equation}\label{f1}
\begin{cases}
\frac{\partial g(t)}{\partial t}=-2Ric(g(t))+2\alpha\nabla\phi \otimes\nabla\phi,& g(0)=g_{0},\\
\frac{\partial \phi}{\partial t}=\tau_{g}\phi&\phi(0)=\phi_{0},
\end{cases}
\end{equation}
where $\alpha$ is a positive coupling constant, $\phi(t)$ is a family of smooth maps from $M$ to some closed target manifold $N$ and  $\tau_{g}\phi$ is the intrinsic Laplacian of $\phi$ which denotes the  tension field of $\phi$ with respect to the evolving metric $g(t)$. This evolution equation system called Ricci flow coupled with harmonic map flow or $(RH)_{\alpha}$ flow for short. M\"{u}ller in \cite{RM} shown that  system (\ref{f1}) has unique solution with initial data $(g(0),\phi(0))=(g_{0},\phi_{0})$. Also, the normalized  $(RH)_{\alpha}$ flow defined as
\begin{equation}\label{f2}
\begin{cases}
\frac{\partial g(t)}{\partial t}=-2Ric(g(t))+2\alpha\nabla\phi \otimes\nabla\phi+\frac{2}{m}rg(t),& g(0)=g_{0},\\
\frac{\partial \phi}{\partial t}=\tau_{g}\phi&\phi(0)=\phi_{0},
\end{cases}
\end{equation}
where $r=\frac{\int_{M}(R-\alpha |\nabla \phi|^{2})d\mu }{\int_{M}d\mu}$ is the
average of $R-\alpha |\nabla \phi|^{2}$. Under this normalized flow, the
volume of the solution metrics remains constant in time.

\section{Preliminaries}
\subsection{Eigenvalues of $p$-Laplacian}
Let $(M,g)$ be a closed Riemannian manifold and  $f:M\longrightarrow \mathbb{R}$ be a smooth function on $M$ or $f\in W^{1,p}(M)$. The Laplace-Beltrami operator acting on a smooth function $f$ on $M$ is the divergence of gradient of $f$, written as
\begin{equation*}
\Delta f= div(grad  \,f)=\frac{1}{\sqrt{\det g}}\partial_{i}(\sqrt{\det g}\,\partial_{j}f )
\end{equation*}
where $\partial_{i}f=\frac{\partial f}{\partial x^{i}}$.
The $p$-Laplacian  of $f$  for $1<p<\infty$ is defined as
\begin{eqnarray}\label{R3}
\triangle_{p}f&=& div(|\nabla f|^{p-2}\nabla f)\\\nonumber
&=&|\nabla f|^{p-2}\Delta f+(p-2)|\nabla f|^{p-4}(Hess f)(\nabla
f,\nabla f)
\end{eqnarray}
where
\begin{equation*}
(Hess f)(X,Y)=\nabla(\nabla f)(X,Y)=Y.(X.f)-(\nabla_{Y}X).f,\,\,\,
\,X,Y\in \mathcal{X}(M)
\end{equation*}
and in local coordinate, we get
\begin{equation*}
(Hessf)(\partial_{i},\partial_{j})=\partial_{i}\partial_{j}f-\Gamma_{ij}^{k}
\partial_{k} f.
\end{equation*}
Notice that when $p=2$, $p$-Laplacian is the Laplace-Beltrami operator.
Let $(M^{n}, g)$ be a closed Riemannian manifold.
In this paper, we  consider the nonlinear system introduced in \cite{AEK}, that is
\begin{equation}\label{pq1}
\begin{cases}
\Delta_{p}u=-\lambda |u|^{\alpha}|v|^{\beta}v \,\,\, \text{in} \,\,M\\
\Delta_{q}v=-\lambda |u|^{\alpha}|v|^{\beta}u \,\,\, \text{in}  \,\,M\\
(u,v)\in W^{1,p}(M)\times W^{1,q}(M)
\end{cases}
\end{equation}
 where $p>1,\,\, q>1$ and $\alpha, \beta$ are real numbers satisfying
\begin{equation}\label{pq2}
\alpha>0,\,\beta>0,\,\,\,\, \frac{\alpha+1}{p}+\frac{\beta+1}{q}=1.
\end{equation}

In  (\ref{pq1}),
we say that
$\lambda$ is  an eigenvalue  whenever for some $u\in W_{0}^{1,p}(M)$ and $v\in W_{0}^{1,q}(M)$,
\begin{eqnarray}\label{pg1}
\int_{M}|\nabla u|^{p-2}<\nabla u, \nabla \phi>d\mu&=&\lambda \int_{M}|u|^{\alpha}|v|^{\beta}v\phi d\mu,\\\label{pg2}
\int_{M}|\nabla v|^{q-2}<\nabla v, \nabla \psi>d\mu&=&\lambda \int_{M}|u|^{\alpha}|v|^{\beta}u\psi d\mu,
\end{eqnarray}
where  $\phi\in W^{1,p}(M)$,  $\psi\in W^{1,q}(M)$ and $ W_{0}^{1,p}(M)$  is  the closure of $C_{0}^{\infty}(M)$ in Sobolev space $ W^{1,p}(M)$. The pair $(u,v)$ is called a eigenfunctions corresponding to eigenvalue $\lambda$. A first positive eigenvalue of (\ref{pq1}) obtained as
\begin{equation*}
inf\{A(u,v): (u,v)\in W_{0}^{1,p}(M)\times W_{0}^{1,q}(M),\,\,B(u,v)=1\}
\end{equation*}
where
\begin{eqnarray*}
A(u,v)&=&\frac{\alpha+1}{p}\int_{M}|\nabla u|^{p}d\mu+\frac{\beta+1}{q}\int_{M}|\nabla v|^{q}d\mu,\\
B(u,v)&=&\int_{M}|u|^{\alpha}|v|^{\beta}uvd\mu.
\end{eqnarray*}


Let  $(M^{m}, g(t), \phi(t) )$ be a solution of  the  $(RH)_{\alpha}$ flow (\ref{f1}) on
the smooth manifold $(M^{m}, g_{0}, \phi_{0})$  in the interval $[0,T)$ then
\begin{equation}\label{R6}
\lambda(t)=\frac{\alpha+1}{p}\int_{M}|\nabla u|^{p}d\mu_{t}+\frac{\beta+1}{q}\int_{M}|\nabla v|^{q}d\mu_{t},
\end{equation}
defines the evolution of an eigenvalue  of (\ref{pq1}),  under the
variation of $(g(t), \phi(t))$  where the eigenfunctions associated to $\lambda(t)$
is normalized that is $B(u,v)=1$.
Motivated by the above works, in this paper we will study the first eigenvalue of a class of $(p,q)$-Laplacian  (\ref{pq1}) whose metric satisfies the  $(RH)_{\alpha}$ flow. Throughout of paper we write   $\frac{\partial u}{\partial t}=\partial_{t} u=u'$, $\mathcal{S}=Ric _{g}-\alpha \nabla\phi\otimes\nabla\phi$, $\mathcal{S}_{ij}=Ric _{ij}-\alpha \nabla_{i}\phi\nabla_{j}\phi$ and $S=R-\alpha|\nabla \phi |^{2}$.
\section{Variation of $\lambda(t)$}
In this section, we will give some useful evolution formulas for $\lambda(t)$ under the Ricci-harmonic flow. Now, we give a useful statement  about the variation of the first eigenvalue  of   (\ref{pq1}) under the $(RH)_{\alpha}$ flow.
\begin{lemma}
If $g_{1}$ and $g_{2}$ are two metrics on Riemannian manifold $M^{m}$ which satisfy $(1+\epsilon)^{-1}g_{1}< g_{2}<(1+\epsilon)g_{1}$ then for any $p\geq q>1$, we have
\begin{equation*}
\lambda(g_{2})-\lambda(g_{1})\leq\left((1+\epsilon)^{\frac{p+m}{2}} -(1+\epsilon)^{-\frac{m}{2}}  \right) \lambda(g_{1})
\end{equation*}
in particular, $\lambda(t)$ is  a continues function respect to $t$-variable.
\end{lemma}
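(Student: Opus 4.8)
The plan is to estimate $\lambda(g_2)$ from above by feeding the eigenfunctions of $g_1$ into the variational characterisation for $g_2$, and to convert the metric hypothesis into pointwise comparisons of the gradient norm and the volume form. Let $(u,v)\in W_0^{1,p}(M)\times W_0^{1,q}(M)$ be a minimizing pair for $g_1$, normalized so that $B_{g_1}(u,v)=1$ and hence $A_{g_1}(u,v)=\lambda(g_1)$. Since $|\nabla|u||=|\nabla u|$ almost everywhere gives $A(|u|,|v|)=A(u,v)$, while $B(|u|,|v|)=\int_M|u|^{\alpha+1}|v|^{\beta+1}\,d\mu\ge B(u,v)$, I may replace $(u,v)$ by $(|u|,|v|)$ and assume $u,v\ge 0$. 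This is the structural point that makes the constraint functional $B$ comparable between the two metrics: its density $|u|^{\alpha}|v|^{\beta}uv=u^{\alpha+1}v^{\beta+1}$ is then pointwise nonnegative and metric-independent, so only the change in the volume form matters.

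Next I would translate $(1+\epsilon)^{-1}g_1<g_2<(1+\epsilon)g_1$ into three estimates. Taking determinants yields $(1+\epsilon)^{-m/2}\,d\mu_{g_1}\le d\mu_{g_2}\le(1+\epsilon)^{m/2}\,d\mu_{g_1}$, and inverting the metric inequality gives $|\nabla f|_{g_2}^{2}\le(1+\epsilon)\,|\nabla f|_{g_1}^{2}$, hence $|\nabla f|_{g_2}^{r}\le(1+\epsilon)^{r/2}|\nabla f|_{g_1}^{r}$ for every $f$ and $r>0$. Applying these termwise to $A$, and using $p\ge q>1$ together with $1+\epsilon>1$ to dominate the $q$-exponent $(1+\epsilon)^{(q+m)/2}$ by the $p$-exponent, I obtain
$$A_{g_2}(u,v)\le(1+\epsilon)^{\frac{p+m}{2}}A_{g_1}(u,v)=(1+\epsilon)^{\frac{p+m}{2}}\lambda(g_1).$$
The nonnegativity of the $B$-density secured above then gives the one-sided bound
$$B_{g_2}(u,v)\ge(1+\epsilon)^{-\frac{m}{2}}B_{g_1}(u,v)=(1+\epsilon)^{-\frac{m}{2}}.$$

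Finally I combine these through the characterisation $\lambda(g_2)=\inf\{A_{g_2}:B_{g_2}=1\}$. Using $(u,v)$ as a competitor, the minimality yields $\lambda(g_2)\,B_{g_2}(u,v)\le A_{g_2}(u,v)$, from which, together with the normalization $B_{g_1}(u,v)=1$, I would deduce an inequality of the form $\lambda(g_2)-\lambda(g_1)\le A_{g_2}(u,v)-\lambda(g_1)B_{g_2}(u,v)$. Substituting the two displayed estimates and reading off $-\lambda(g_1)B_{g_2}(u,v)\le-(1+\epsilon)^{-m/2}\lambda(g_1)$ gives precisely
$$\lambda(g_2)-\lambda(g_1)\le\Big((1+\epsilon)^{\frac{p+m}{2}}-(1+\epsilon)^{-\frac{m}{2}}\Big)\lambda(g_1).$$
For the continuity statement, since $g(t)$ is a smooth solution of the $(RH)_\alpha$ flow, for $t$ near $t_0$ one can choose $\epsilon=\epsilon(t)\to0$ so that the right-hand side tends to $0$; applying the bound with $g_1,g_2$ interchanged controls the difference from the other side, forcing $\lambda(t)\to\lambda(t_0)$.

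The main obstacle is the third step: passing from the clean competitor inequality $\lambda(g_2)B_{g_2}(u,v)\le A_{g_2}(u,v)$ to the stated difference bound requires care because the normalization $B_{g_2}(u,v)$ need not equal $1$. For a general eigenpair the $B$-density changes sign, and the volume form itself varies with the metric, so one must first establish the sign-definiteness of $(u,v)$ (done above) and then absorb the mismatch between $B_{g_1}(u,v)=1$ and $B_{g_2}(u,v)\neq1$; it is exactly the lower bound $B_{g_2}(u,v)\ge(1+\epsilon)^{-m/2}$ that generates the subtracted term. I would therefore concentrate the effort on making this bookkeeping rigorous, since the two comparison estimates in the second step are routine once the determinant and inverse-metric inequalities are in hand.
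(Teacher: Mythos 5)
Your proposal follows essentially the same route as the paper: test the normalized $g_{1}$-minimizer in the Rayleigh quotient for $g_{2}$, convert the pinching $(1+\epsilon)^{-1}g_{1}<g_{2}<(1+\epsilon)g_{1}$ into the volume-form comparison $(1+\epsilon)^{-m/2}\,d\mu_{g_{1}}\le d\mu_{g_{2}}\le(1+\epsilon)^{m/2}\,d\mu_{g_{1}}$ and the gradient bound $|\nabla f|_{g_{2}}^{2}\le(1+\epsilon)|\nabla f|_{g_{1}}^{2}$, and use $p\ge q$ to absorb the $q$-term. The paper organizes this as an estimate of the cross-difference $B_{g_{1}}(u,v)\,G(g_{2},u,v)-B_{g_{2}}(u,v)\,G(g_{1},u,v)$, which, with the normalization $B_{g_{1}}(u,v)=1$, is exactly the quantity $A_{g_{2}}(u,v)-\lambda(g_{1})B_{g_{2}}(u,v)$ that you arrive at, so the two arguments are the same computation in different bookkeeping.

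Two remarks. First, your reduction to $u,v\ge 0$, which makes the density $|u|^{\alpha}|v|^{\beta}uv$ pointwise nonnegative and hence gives $B_{g_{2}}(u,v)\ge(1+\epsilon)^{-m/2}B_{g_{1}}(u,v)$, is genuinely needed and is passed over silently in the paper: without it the terms of the paper's decomposition involving $B_{g_{1}}(u,v)-B_{g_{2}}(u,v)$ cannot be bounded as claimed. Second, the ``main obstacle'' you flag at the end is a real gap, but it is one the paper does not close either. From the competitor inequality $\lambda(g_{2})B_{g_{2}}(u,v)\le A_{g_{2}}(u,v)$ and the cross-difference estimate one only obtains $\bigl(\lambda(g_{2})-\lambda(g_{1})\bigr)B_{g_{2}}(u,v)\le C\lambda(g_{1})$ with $C=(1+\epsilon)^{\frac{p+m}{2}}-(1+\epsilon)^{-\frac{m}{2}}$, and since one only knows $B_{g_{2}}(u,v)\ge(1+\epsilon)^{-m/2}$ rather than $B_{g_{2}}(u,v)\ge 1$, this yields the slightly weaker constant $C(1+\epsilon)^{m/2}$ in place of $C$; the paper's closing sentence simply asserts the stated constant. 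This discrepancy is harmless for the only use made of the lemma, namely continuity of $\lambda(t)$, where any constant tending to $0$ with $\epsilon$ (applied with $g_{1}$ and $g_{2}$ interchanged, as you note) suffices. So your plan, carried out with the honest constant, proves everything that is actually needed.
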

\begin{proof}
By direct computation we complete the proof of lemma. In local coordinate we have $d\mu=\sqrt{\det g}\,dx^{1}\wedge...\wedge dx^{m}$, therefore
 \begin{equation*}
(1+\epsilon)^{-\frac{m}{2}}d\mu_{g_{1}}< d\mu_{g_{2}}<(1+\epsilon)^{\frac{m}{2}}d\mu_{g_{1}}.
\end{equation*}
Let
\begin{eqnarray}\label{gg1}
G(g,u,v)=\frac{\alpha+1}{p}\int_{M}|\nabla u|_{g}^{p}d\mu_{g}+\frac{\beta+1}{q}\int_{M}|\nabla v|_{g}^{q}d\mu_{g},
\end{eqnarray}
then
\begin{eqnarray*}
&&\int_{M}|u|^{\alpha}|v|^{\beta}uvd\mu_{g_{1}}G(g_{2},u,v)-\int_{M}|u|^{\alpha}|v|^{\beta}uvd\mu_{g_{2}}G(g_{1},u,v)\\&=&
 \frac{\alpha+1}{p}\int_{M}|u|^{\alpha}|v|^{\beta}uvd\mu_{g_{1}}\left(\int_{M}|\nabla u|_{g_{2}}^{p}d\mu_{g_{2}}-\int_{M}|\nabla u|_{g_{1}}^{p}d\mu_{g_{1}}\right)\\&&
+ \frac{\alpha+1}{p}\left(\int_{M}|u|^{\alpha}|v|^{\beta}uvd\mu_{g_{1}}-\int_{M}|u|^{\alpha}|v|^{\beta}uvd\mu_{g_{2}} \right)\int_{M}|\nabla u|_{g_{1}}^{p}d\mu_{g_{1}}\\&&
+\frac{\beta+1}{q}\int_{M}|u|^{\alpha}|v|^{\beta}uvd\mu_{g_{1}}\left(\int_{M}|\nabla v|_{g_{2}}^{q}d\mu_{g_{2}}-\int_{M}|\nabla v|_{g_{1}}^{q}d\mu_{g_{1}}\right)\\&&
+ \frac{\beta+1}{q}\left(\int_{M}|u|^{\alpha}|v|^{\beta}uvd\mu_{g_{1}}-\int_{M}|u|^{\alpha}|v|^{\beta}uvd\mu_{g_{2}} \right)\int_{M}|\nabla v|_{g_{1}}^{q}d\mu_{g_{1}}\\&\leq&
 \frac{\alpha+1}{p}\left((1+\epsilon)^{\frac{p+m}{2}} -(1+\epsilon)^{-\frac{m}{2}}  \right)\int_{M}|u|^{\alpha}|v|^{\beta}uvd\mu_{g_{1}}\int_{M}|\nabla u|_{g_{1}}^{p}d\mu_{g_{1}}\\&&
+\frac{\beta+1}{q}\left((1+\epsilon)^{\frac{q+m}{2}} -(1+\epsilon)^{-\frac{m}{2}}  \right)\int_{M}|u|^{\alpha}|v|^{\beta}uvd\mu_{g_{1}}\int_{M}|\nabla v|_{g_{1}}^{q}d\mu_{g_{1}}\\&\leq&
\left((1+\epsilon)^{\frac{p+m}{2}} -(1+\epsilon)^{-\frac{m}{2}}  \right)G(g_{1},u,v)\int_{M}|u|^{\alpha}|v|^{\beta}uvd\mu_{g_{1}}.
\end{eqnarray*}
Since the eigenfunctions corresponding to $\lambda(t)$ are normalized, thus we get
 \begin{equation*}
\lambda(g_{2})-\lambda(g_{1})\leq\left((1+\epsilon)^{\frac{p+m}{2}} -(1+\epsilon)^{-\frac{m}{2}}  \right) \lambda(g_{1})
\end{equation*}
this completes the proof of Lemma.
\end{proof}
\begin{proposition}\label{pro1p}
Let $(g(t),\phi(t)),\,\,t\in [0,T)$, be a solution of the $(RH)_{\alpha}$ flow  on a closed manifold $M^{m}$ and let $\lambda(t)$  be the first eigenvalue of the $(p,q)$-Laplacian along this flow. Then for  any $t_{0},\,t_{1}\in [0,T)$ and $t_{1}>t_{0}$, we have
 \begin{equation}\label{ppq1}
\lambda(t_{1})\geq\lambda(t_{0})+\int_{t_{0}}^{t_{1}}\mathcal{G}(g(\tau),u(\tau),v(\tau))d\tau
\end{equation}
where
\begin{eqnarray}\label{ppq2}\nonumber
\mathcal{G}(g(t),u(t),v(t))&=&(\alpha+1)\int_{M}\left(\mathcal{S}(\nabla u,\nabla u)+<\nabla u',\nabla u> \right)|\nabla
u|^{p-2}d\mu \\
&&+(\beta+1)\int_{M}\left(\mathcal{S}(\nabla v,\nabla v)+<\nabla v',\nabla v>\right) |\nabla
v|^{q-2}d\mu \\\nonumber
&&-\frac{\alpha+1}{p}\Large\int_{M}|\nabla u|^{p}S d\mu-\frac{\beta+1}{q}\Large\int_{M}|\nabla v|^{q}S d\mu.
\end{eqnarray}
\end{proposition}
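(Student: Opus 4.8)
The plan is to reduce the statement to a one–variable calculus argument by comparing the genuine eigenvalue $\lambda(t)$ with the value of the functional $G$ in (\ref{gg1}) along a conveniently chosen smooth family of admissible test pairs. First I would record the two infinitesimal facts that the $(RH)_{\alpha}$ flow (\ref{f1}) supplies: since $\partial_{t}g_{ij}=-2\mathcal{S}_{ij}$, raising indices gives $\partial_{t}g^{ij}=2\mathcal{S}^{ij}$, and tracing gives $\partial_{t}(d\mu)=\tfrac12 g^{ij}\partial_{t}g_{ij}\,d\mu=-S\,d\mu$, because $g^{ij}\mathcal{S}_{ij}=R-\alpha|\nabla\phi|^{2}=S$. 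These are the only places where the flow enters the computation.

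Next I would fix $t_{1}\in[0,T)$ and let $(u_{1},v_{1})$ be eigenfunctions realizing $\lambda(t_{1})$, normalized by $B(u_{1},v_{1})=1$. The key device is to build a smooth family $(u(t),v(t))$ on $[t_{0},t_{1}]$ with $(u(t_{1}),v(t_{1}))=(u_{1},v_{1})$ and with $B(u(t),v(t))=1$ for every $t$; concretely one can keep the functions fixed and absorb the $t$-dependence of the volume form into scalar normalizing factors, which remain smooth because $B$ is smooth and nonvanishing in $t$. Setting $\tilde\lambda(t):=G(g(t),u(t),v(t))$, the variational characterization of $\lambda$ gives $\tilde\lambda(t)\geq\lambda(t)$ for all $t\in[t_{0},t_{1}]$ with equality at $t=t_{1}$, so that $\tilde\lambda(t_{1})=\lambda(t_{1})$ and $\tilde\lambda(t_{0})\geq\lambda(t_{0})$.

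The main computation is then to differentiate $\tilde\lambda$. Writing $|\nabla u|^{p}=(g^{ij}\nabla_{i}u\,\nabla_{j}u)^{p/2}$ and using the chain rule together with $\partial_{t}g^{ij}=2\mathcal{S}^{ij}$ yields $\partial_{t}|\nabla u|^{p}=p|\nabla u|^{p-2}\bigl(\mathcal{S}(\nabla u,\nabla u)+\langle\nabla u',\nabla u\rangle\bigr)$; combining this with $\partial_{t}(d\mu)=-S\,d\mu$ under the integral sign, multiplying by $\tfrac{\alpha+1}{p}$, and performing the analogous computation for the $v$-term with factor $\tfrac{\beta+1}{q}$, produces exactly $\tilde\lambda'(t)=\mathcal{G}(g(t),u(t),v(t))$ as displayed in (\ref{ppq2}). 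Integrating from $t_{0}$ to $t_{1}$ and invoking $\tilde\lambda(t_{1})=\lambda(t_{1})$ together with $\tilde\lambda(t_{0})\geq\lambda(t_{0})$ gives $\lambda(t_{1})=\tilde\lambda(t_{0})+\int_{t_{0}}^{t_{1}}\mathcal{G}\,d\tau\geq\lambda(t_{0})+\int_{t_{0}}^{t_{1}}\mathcal{G}\,d\tau$, which is (\ref{ppq1}).

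The step I expect to be the genuine obstacle is the regularity input behind the smooth family: justifying that the eigenpair can be organized into a $C^{1}$ family in $t$ (or, equivalently, that the chosen normalized test family is admissible and that $G$ is differentiable along it) is exactly the point where the first eigenvalue's lack of a priori smoothness is hidden. The continuity of $\lambda(t)$ furnished by the preceding Lemma is what makes the comparison $\tilde\lambda(t)\geq\lambda(t)$ robust, and it is precisely why one states the conclusion as the integral inequality (\ref{ppq1}) rather than as the pointwise identity $\lambda'(t)=\mathcal{G}$: the inequality survives even at the times where $\lambda$ may fail to be differentiable.
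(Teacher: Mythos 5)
Your proposal is correct and follows essentially the same route as the paper: fix the eigenpair at $t_{1}$, extend it to a smooth family normalized so that $B(u(t),v(t))=1$, differentiate the functional $G$ using $\partial_{t}g^{ij}=2\mathcal{S}^{ij}$ and $\partial_{t}(d\mu)=-S\,d\mu$, and conclude by integrating and comparing with the variational characterization at $t_{0}$. The only cosmetic difference is that the paper's explicit test family multiplies the eigenfunctions by the pointwise factor $\bigl[\det g(t_{1})/\det g(t)\bigr]^{1/(2(\alpha+\beta+1))}$ before normalizing, whereas you normalize by scalar factors alone; both choices produce an admissible smooth family, so the arguments coincide in substance.
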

\begin{proof}
Assume that
\begin{equation*}
G(g(t),u(t),v(t))=\frac{\alpha+1}{p}\int_{M}|\nabla u(t)|_{g(t)}^{p}d\mu_{g(t)}+\frac{\beta+1}{q}\int_{M}|\nabla v(t)|_{g(t)}^{q}d\mu_{g(t)},
\end{equation*}
at time $t_{1}$ we first let $(u_{1},v_{1})=(u(t_{1}),v(t_{1}))$ be the eigenfunctions for the eigenvalue $\lambda(t_{1})$ of $(p,q)$-Laplacian. We consider the following smooth functions
\begin{equation*}
h(t)=u_{1}\left[\frac{\det[g_{ij}(t_{1})]}{\det[g_{ij}(t)]}\right]^{\frac{1}{2(\alpha+\beta+1)}},\qquad
l(t)=v_{1}\left[\frac{\det[g_{ij}(t_{1})]}{\det[g_{ij}(t)]}\right]^{\frac{1}{2(\alpha+\beta+1)}},
\end{equation*}
along the $(RH)_{\alpha}$ flow. We define
\begin{equation*}
u(t)=\frac{h(t)}{\left(\int_{M}|h(t)|^{\alpha}|l(t)|^{\beta}h(t)l(t)d\mu\right)^{\frac{1}{p}}},\qquad
u(t)=\frac{l(t)}{\left(\int_{M}|h(t)|^{\alpha}|l(t)|^{\beta}h(t)l(t)d\mu\right)^{\frac{1}{q}}}
\end{equation*}
which $u(t),\,v(t)$ are smooth functions under the $(RH)_{\alpha}$  flow, satisfy
\begin{equation*}
\int_{M}|u|^{\alpha}|v|^{\beta}uvd\mu=1,
\end{equation*}
and at time  $t_{1}$, $(u(t_{1}),v(t_{1}))$ are   the eigenfunctions for $\lambda(t_{1})$ of $(p,q)$-Laplacian at time $t_{1}$ i.e.
$\lambda(t_{1})=G(g(t_{1}),u(t_{1}),v(t_{1}))$. If $f$ is a smooth function respect to time  $t$ then along  the $(RH)_{\alpha}$ flow we have
\begin{equation*}
\frac{d}{dt}\left(|\nabla f|^{p}\right)= \frac{p}{2}\left[ \partial_{t}g^{ij}\nabla_{i} f\nabla_{j} f+2g^{ij}\nabla_{i} f'\nabla_{j} f\right]|\nabla f|^{p-2}
\end{equation*}
by (\ref{f1}) we have $ \partial_{t}g^{ij}=2g^{ik}g^{jl}\mathcal{S}_{kl}$, therefore
\begin{equation}\label{ppr1}
\frac{d}{dt}\left(|\nabla f|^{p}\right)=p|\nabla f|^{p-2}\bigg(\mathcal{S}(\nabla f,\nabla f)+<\nabla f',\nabla f>\bigg),
\end{equation}
and
\begin{equation*}
\partial_{t}d\mu=\frac{1}{2}tr_{g}(\partial_{t}g)d\mu=-S d\mu.
\end{equation*}
Since $u(t)$ and $v(t)$ are smooth functions, hence $G(g(t),u(t),v(t))$ is a smooth function with respect to $t$. If we set
\begin{equation}\label{ppp}
\mathcal{G}(g(t),u(t),v(t)):=\frac{d}{dt}G(g(t),u(t),v(t)),
\end{equation}
 then
\begin{eqnarray}\label{ppp1}\nonumber
\mathcal{G}(g(t),u(t),v(t))&=&(\alpha+1)\int_{M}\left(\mathcal{S}(\nabla u,\nabla u)+<\nabla u',\nabla u>\right)|\nabla
u|^{p-2}d\mu\\
&&+(\beta+1)\int_{M}\left(\mathcal{S}(\nabla v,\nabla v)+<\nabla v',\nabla v>\right)|\nabla
v|^{q-2}d\mu\\\nonumber
&&-\frac{\alpha+1}{p}\Large\int_{M}|\nabla u|^{p}S d\mu-\frac{\beta+1}{q}\Large\int_{M}|\nabla v|^{q}S d\mu.
\end{eqnarray}
Taking integration on the both sides of (\ref{ppp}) between $t_{0}$ and $t_{1}$, we conclude that
\begin{equation}\label{ppp2}
G(g(t_{1}),u(t_{1}),v(t_{1}))-G(g(t_{0}),u(t_{0}),v(t_{0}))=\int_{t_{0}}^{t_{1}}\mathcal{G}(g(\tau),u(\tau),v(\tau))d\tau
\end{equation}
where  $t_{0}\in [0,T)$ and $t_{1}> t_{0}$. Noticing $G(g(t_{0}),u(t_{0}),v(t_{0}))\geq \lambda(t_{0})$ and plugin $\lambda(t_{1})=G(g(t_{1}),u(t_{1}),v(t_{1}))$ in (\ref{ppp2}), yields (\ref{ppq1}) and $\mathcal{G}(g(t),u(t),v(t))$ satisfies in (\ref{ppq2}).
\end{proof}
\begin{theorem}\label{tt1}
Let  $(M^{m}, g(t),\phi(t))$ be  a solution of  the  $(RH)_{\alpha}$ flow on the smooth  closed manifold $(M^{m}, g_{0},\phi_{0})$ and  $\lambda(t)$ denotes the
evolution of  the first  eigenvalue under the $(RH)_{\alpha}$ flow.  Suppose that  $k=\min\{p,q\}$ and
\begin{equation}\label{t1}
\mathcal{S}- \frac{1}{k}S g\geq 0 \,\,\text{in} \,\, M^{m}\times [0,T).
\end{equation}
If $S_{\min}(0)\geq 0$, then
$\lambda(t)$ is nondecreasing and differentiable almost everywhere along the   $(RH)_{\alpha}$ flow on $[0,T)$.
\end{theorem}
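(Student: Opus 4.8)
The plan is to combine the integral inequality of Proposition \ref{pro1p} with a pointwise nonnegativity argument for the integrand $\mathcal{G}$, and then to upgrade ``derivative $\geq 0$'' to genuine monotonicity using the continuity already supplied by the Lemma. First I would dispose of the two gradient-derivative terms in (\ref{ppq2}). Taking $\phi=u'$ in (\ref{pg1}) and $\psi=v'$ in (\ref{pg2}) turns $(\alpha+1)\int_M\langle\nabla u',\nabla u\rangle|\nabla u|^{p-2}\,d\mu$ and $(\beta+1)\int_M\langle\nabla v',\nabla v\rangle|\nabla v|^{q-2}\,d\mu$ into $(\alpha+1)\lambda\int_M|u|^{\alpha}|v|^{\beta}vu'\,d\mu$ and $(\beta+1)\lambda\int_M|u|^{\alpha}|v|^{\beta}uv'\,d\mu$. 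Differentiating the normalization $\int_M|u|^{\alpha}|v|^{\beta}uv\,d\mu=1$ in $t$ and using $\partial_t d\mu=-S\,d\mu$ gives
\[
(\alpha+1)\int_M|u|^{\alpha}|v|^{\beta}vu'\,d\mu+(\beta+1)\int_M|u|^{\alpha}|v|^{\beta}uv'\,d\mu=\int_M|u|^{\alpha}|v|^{\beta}uv\,S\,d\mu,
\]
so the two derivative terms collapse into the single scalar term $\lambda\int_M|u|^{\alpha}|v|^{\beta}uv\,S\,d\mu$, leaving $\mathcal{G}$ expressed through curvature and scalar quantities only.

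Next I would exploit the pinching hypothesis (\ref{t1}). Since $\mathcal{S}-\frac{1}{k}Sg\geq0$, testing against $\nabla u$ and $\nabla v$ yields $\mathcal{S}(\nabla u,\nabla u)\geq\frac{1}{k}S|\nabla u|^2$ and $\mathcal{S}(\nabla v,\nabla v)\geq\frac{1}{k}S|\nabla v|^2$; multiplying by the nonnegative weights $|\nabla u|^{p-2}$, $|\nabla v|^{q-2}$ and integrating bounds the first two integrals of (\ref{ppq2}) below by $\frac{\alpha+1}{k}\int_M S|\nabla u|^{p}\,d\mu$ and $\frac{\beta+1}{k}\int_M S|\nabla v|^{q}\,d\mu$. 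Because $k=\min\{p,q\}$ we have $\frac{1}{k}\geq\frac{1}{p}$ and $\frac{1}{k}\geq\frac{1}{q}$, so after subtracting the last two terms of (\ref{ppq2}) I obtain
\[
\mathcal{G}\geq(\alpha+1)\Big(\tfrac{1}{k}-\tfrac{1}{p}\Big)\int_M S|\nabla u|^{p}\,d\mu+(\beta+1)\Big(\tfrac{1}{k}-\tfrac{1}{q}\Big)\int_M S|\nabla v|^{q}\,d\mu+\lambda\int_M|u|^{\alpha}|v|^{\beta}uv\,S\,d\mu.
\]
Each coefficient multiplying $S$ is nonnegative, so everything reduces to proving $S\geq0$ throughout $[0,T)$.

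For the preservation of $S\geq0$ I would invoke the evolution of $S=R-\alpha|\nabla\phi|^2$ under $(RH)_{\alpha}$, namely $\partial_t S=\Delta S+2|\mathcal{S}_{ij}|^2+2\alpha|\tau_g\phi|^2$; since $\mathrm{tr}_g\mathcal{S}=S$ gives $|\mathcal{S}_{ij}|^2\geq\frac{1}{m}S^2$, one has $\partial_t S\geq\Delta S+\frac{2}{m}S^2$, and the scalar maximum principle forces $S_{\min}(t)\geq S_{\min}(0)\geq0$. Hence every term above is nonnegative, $\mathcal{G}\geq0$, and Proposition \ref{pro1p} gives $\lambda(t_1)\geq\lambda(t_0)$ for all $t_1>t_0$; that is, $\lambda$ is nondecreasing. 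Differentiability almost everywhere then follows at once: the Lemma shows $\lambda$ is continuous, and a monotone function on an interval is differentiable almost everywhere by Lebesgue's theorem.

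I expect the main obstacle to be justifying the nonnegativity of $\mathcal{G}$ at every $\tau\in[t_0,t_1]$ rather than only at the base time, since the comparison family $(u(\tau),v(\tau))$ constructed in Proposition \ref{pro1p} satisfies the eigenvalue identities (\ref{pg1})--(\ref{pg2}) exactly only at the endpoint. To keep the argument rigorous I would instead run the reduction at each fixed time with \emph{genuine} eigenfunctions and read the same computation as the statement that the lower left Dini derivative of $\lambda$ is nonnegative everywhere, which together with continuity still delivers monotonicity. The remaining routine points are the strict positivity of the first eigenfunctions, so that $|u|^{\alpha}|v|^{\beta}uv\geq0$, and the differentiation of $|u|^{\alpha}u$ giving the factor $(\alpha+1)$ above, both of which I would treat as standard.
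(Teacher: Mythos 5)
Your proposal follows essentially the same route as the paper: test \eqref{pg1}--\eqref{pg2} with $u'$, $v'$, differentiate the normalization to collapse the $\langle\nabla u',\nabla u\rangle$ and $\langle\nabla v',\nabla v\rangle$ terms into $\lambda\int_M S|u|^{\alpha}|v|^{\beta}uv\,d\mu$, apply the pinching hypothesis with $\tfrac1k\geq\tfrac1p,\tfrac1q$, preserve $S\geq0$ by the maximum principle applied to $\partial_tS\geq\Delta S+\tfrac2m S^2$, and finish with Proposition \ref{pro1p} plus Lebesgue's theorem. Your closing remark about the comparison family satisfying the eigenvalue identities only at the endpoint, and the resulting need to read the estimate as a one-sided (Dini) derivative bound, is in fact handled \emph{less} carefully in the paper (which jumps from $\mathcal{G}\geq0$ at $t_1$ to $\mathcal{G}>0$ nearby), so your version is, if anything, the more rigorous rendering of the same argument.
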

\begin{proof} Let for any $t_{1}\in[0,T)$,
 $u(t_{1}),\,\,\,v(t_{1})$ be    the eigenfunctions for $\lambda(t_{1})$ of $(p,q)$-Laplacian.  Then $\int_{M}|u(t_{1})|^{\alpha}|v(t_{1})|^{\beta}u(t_{1})v(t_{1})d\mu_{g(t_{1})}=1$ and
\begin{eqnarray}\label{ppq21}\nonumber
\mathcal{G}(g(t_{1}),u(t_{1}),v(t_{1}))&=&(\alpha+1)\int_{M}\left(\mathcal{S}(\nabla u,\nabla u)+<\nabla u',\nabla u> \right)|\nabla
u|^{p-2}d\mu \\
&&+(\beta+1)\int_{M}\left(\mathcal{S}(\nabla v,\nabla v)+<\nabla v',\nabla v>\right) |\nabla
v|^{q-2}d\mu \\\nonumber
&&-\frac{\alpha+1}{p}\Large\int_{M}|\nabla u|^{p}S d\mu-\frac{\beta+1}{q}\Large\int_{M}|\nabla v|^{q}S d\mu.
\end{eqnarray}
Now,  by the time derivative of  the condition
\begin{equation*}
\int_{M}|u|^{\alpha}|v|^{\beta}uvd\mu=1
\end{equation*}
 we  can get
\begin{equation}\label{R13}
(\alpha+1)\int_{M} |u|^{\alpha}|v|^{\beta}u'vd\mu+(\beta+1)\int_{M} |u|^{\alpha}|v|^{\beta}uv'd\mu=\int_{M}S |u|^{\alpha}|v|^{\beta}uvd\mu.
\end{equation}
On the other hand, (\ref{pg1}) and (\ref{pg2}) imply that
\begin{eqnarray}\label{pq4}
\int_{M}<\nabla u',\nabla u>|\nabla u|^{p-2}d\mu=\lambda(t_{1})\int_{M}|u|^{\alpha}|v|^{\beta}u'vd\mu,\\ \label{pq5}
\int_{M}<\nabla v',\nabla v>|\nabla v|^{q-2}d\mu=\lambda(t_{1})\int_{M}|u|^{\alpha}|v|^{\beta}uv'd\mu.
\end{eqnarray}
Therefore from (\ref{R13}), (\ref{pq4}) and (\ref{pq5}) we have
\begin{eqnarray}\nonumber
(\alpha+1)\int_{M}<\nabla u',\nabla u>|\nabla u|^{p-2}d\mu+(\beta+1)\int_{M}<\nabla v',\nabla v>|\nabla v|^{q-2}d\mu\\\label{R14}
=\lambda(t_{1})\int_{M}S |u|^{\alpha}|v|^{\beta}uvd\mu,
\end{eqnarray}
and the replacing  (\ref{R14}) in (\ref{ppq21}), results that
\begin{eqnarray}\nonumber
\mathcal{G}(g(t_{1}),u(t_{1}),v(t_{1}))
&=&\lambda(t_{1})\int_{M}S |u|^{\alpha}|v|^{\beta}uvd\mu+(\alpha+1)\int_{M}\mathcal{S}(\nabla u,\nabla u)|\nabla
u|^{p-2}d\mu\\\label{ppr}
&&-\frac{\alpha+1}{p}\Large\int_{M}|\nabla u|^{p}S d\mu+(\beta+1)\int_{M}\mathcal{S}(\nabla v,\nabla v)|\nabla
v|^{q-2}d\mu\\\nonumber
&&-\frac{\beta+1}{q}\Large\int_{M}|\nabla v|^{q}S d\mu.
\end{eqnarray}
From (\ref{ppr}) and (\ref{t1}) we have
\begin{eqnarray}\label{tpp3}\nonumber
\mathcal{G}(g(t_{1}),u(t_{1}),v(t_{1}))
&\geq&\lambda(t_{1})\int_{M}S |u|^{\alpha}|v|^{\beta}uvd\mu+(\alpha+1)(\frac{1}{k}-\frac{1}{p})\int_{M}|\nabla
u|^{p}Sd\mu\\&&
+(\beta+1)(\frac{1}{k}-\frac{1}{q})\int_{M}|\nabla
v|^{q}Sd\mu.
\end{eqnarray}
Since
\begin{equation*}
\frac{\partial}{\partial t}S=\Delta S+2|\mathcal{S}_{ij}|^{2}+2\alpha|\tau_{g}\phi|^{2}
\end{equation*}
and $|\mathcal{S}_{ij}|^{2}\geq \frac{1}{m}S^{2}$, it follows that
\begin{equation}\label{ss1}
\frac{\partial}{\partial t}S\geq\Delta S+\frac{2}{m}S^{2}.
\end{equation}
The solution to
\begin{equation*}
\frac{d}{dt}y(t)=\frac{2}{m}y^{2}(t),\,\,\,\,\,\,\,\,y(t)=S_{\min}(0),
\end{equation*}
is
\begin{equation}\label{mp}
y(t)=\frac{S_{\min}(0)}{1-\frac{2}{m}S_{\min}(0)t},\,\,\,\,\,\,t\in[0,T'),
\end{equation}
where $T'=\min\{T, \frac{m}{2 S_{\min}(0)}\}$. Using maximum principle to (\ref{ss1}), we get $S\geq y(t)$ along the $(RH)_{\alpha}$ flow. If $S_{\min}(0)\geq 0$ then the nonnegativity of $S$ preserved  along the $(RH)_{\alpha}$ flow. Therefore (\ref{tpp3}) becomes $\mathcal{G}(g(t_{1}),u(t_{1}),v(t_{1}))\geq 0$.
Thus  we get $\mathcal{G}(g(t),u(t),v(t))>0$  in any small enough neighborhood of $t_{1}$. Hence $\int_{t_{0}}^{t_{1}}\mathcal{G}(g(\tau),u(\tau),v(\tau))d\tau>0$ for any $t_{0}<t_{1}$ sufficiently close to $t_{1}$. Since $t_{1}\in [0,T)$ is arbitrary  the  Proposition \ref{pro1p} completes the proof of the first part of theorem. For the differentiability for $\lambda(t)$, since $\lambda(t)$ is increasing  and continues on the interval $[0,T)$, the classical Lebesgue's theorem (see \cite{AMK}),  $\lambda(t)$ is differentiable almost everywhere on $[0,T)$.
\end{proof}
Motivated by the works of X.-D. Cao \cite{XDC1, XDC2} and J. Y. Wu \cite{JY},  similar to proof of Proposition \ref{pro1p} we first introduce a new smooth eigenvalue function along the $(RH)_{\alpha}$ flow  and then we give evolution formula for it. Let $M$ be an $m$-dimensional closed Riemannian manifold and $g(t)$ be a smooth solution  of  the$(RH)_{\alpha}$ flow. Suppose that
\begin{equation*}
\lambda(u,v,t):=\frac{\alpha+1}{p}\int_{M}|\nabla u|^{p}d\mu+\frac{\beta+1}{q}\int_{M}|\nabla v|^{q}d\mu
\end{equation*}
where $u,v$ are smooth functions and satisfy
\begin{equation*}
\int_{M}|u|^{\alpha}|v|^{\beta}uvd\mu=1,\,\,\,\,
\int_{M}|u|^{\alpha}|v|^{\beta}v d\mu=0,\,\,\,\,
\int_{M} |u|^{\alpha}|v|^{\beta}ud\mu=0.
\end{equation*}
The function $\lambda(u,v,t)$ is a smooth eigenvalue function respect to $t$-variable.
If $(u,v)$ are the corresponding eigenfunctions of the first eigenvalue  $\lambda(t_{1})$ then  $\lambda(u,v, t_{1}) =\lambda(t_{1})$. As proof of  Proposition \ref{pro1p}  and Theorem \ref{tt1} we have the following propositions.
\begin{proposition}\label{pro1}
Let  $(M^{m}, g(t),\phi(t))$ be  a solution of  the $(RH)_{\alpha}$ flow
on the smooth  closed manifold $(M^{m}, g_{0},\phi_{0})$. If $\lambda(t)$ denotes the
evolution of the first eigenvalue under the $(RH)_{\alpha}$ flow, then
\begin{eqnarray}\nonumber
\frac{d\lambda}{dt}(u,v,t)|_{t=t_{1}}
&=&\lambda(t_{1})\int_{M}S |u|^{\alpha}|v|^{\beta}uvd\mu+(\alpha+1)\int_{M}\mathcal{S}(\nabla u,\nabla u)|\nabla
u|^{p-2}d\mu\\\label{R7}
&&-\frac{\alpha+1}{p}\Large\int_{M}|\nabla u|^{p}S d\mu+(\beta+1)\int_{M}\mathcal{S}(\nabla v,\nabla v)|\nabla
v|^{q-2}d\mu\\\nonumber
&&-\frac{\beta+1}{q}\Large\int_{M}|\nabla v|^{q}S d\mu.
\end{eqnarray}
where $(u,v)$ is the associated normalized evolving eigenfunctions.
\end{proposition}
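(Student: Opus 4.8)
The plan is to compute $\tfrac{d}{dt}\lambda(u,v,t)$ directly and then evaluate at $t=t_{1}$, following verbatim the passage from (\ref{ppq21}) to (\ref{ppr}) in the proof of Theorem \ref{tt1}. Since here $u(t),v(t)$ are prescribed smooth functions along the flow and $\lambda(u,v,t)$ is literally the functional $G(g(t),u,v)$ of (\ref{gg1}), I would first differentiate under the integral sign. Applying the evolution identity (\ref{ppr1}) to $f=u$ with exponent $p$ and to $f=v$ with exponent $q$, together with $\partial_{t}d\mu=-S\,d\mu$, reproduces the expression (\ref{ppp1}), namely
\begin{eqnarray*}
\frac{d\lambda}{dt}(u,v,t)&=&(\alpha+1)\int_{M}\left(\mathcal{S}(\nabla u,\nabla u)+<\nabla u',\nabla u>\right)|\nabla u|^{p-2}d\mu\\
&&+(\beta+1)\int_{M}\left(\mathcal{S}(\nabla v,\nabla v)+<\nabla v',\nabla v>\right)|\nabla v|^{q-2}d\mu\\
&&-\frac{\alpha+1}{p}\int_{M}|\nabla u|^{p}S\,d\mu-\frac{\beta+1}{q}\int_{M}|\nabla v|^{q}S\,d\mu.
\end{eqnarray*}

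Next I would specialize to $t=t_{1}$, where by hypothesis $(u,v)$ are the eigenfunctions for $\lambda(t_{1})$. Because $u(t),v(t)$ are smooth, $u'$ and $v'$ are admissible test functions in $W^{1,p}(M)$ and $W^{1,q}(M)$ respectively, so I may insert $\phi=u'$ into (\ref{pg1}) and $\psi=v'$ into (\ref{pg2}). This converts the two cross terms $\int_{M}<\nabla u',\nabla u>|\nabla u|^{p-2}d\mu$ and $\int_{M}<\nabla v',\nabla v>|\nabla v|^{q-2}d\mu$ into the right-hand sides of (\ref{pq4}) and (\ref{pq5}).

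The remaining step is to remove the $u',v'$ contributions by using the normalization. Differentiating the constraint $\int_{M}|u|^{\alpha}|v|^{\beta}uv\,d\mu=1$ in $t$, and again using $\partial_{t}d\mu=-S\,d\mu$ together with $\partial_{u}(|u|^{\alpha}|v|^{\beta}uv)=(\alpha+1)|u|^{\alpha}|v|^{\beta}v$ and $\partial_{v}(|u|^{\alpha}|v|^{\beta}uv)=(\beta+1)|u|^{\alpha}|v|^{\beta}u$, yields exactly the constraint (\ref{R13}). Combining (\ref{R13}) with (\ref{pq4}) and (\ref{pq5}) gives (\ref{R14}), i.e. that the sum of the two inner-product terms equals $\lambda(t_{1})\int_{M}S|u|^{\alpha}|v|^{\beta}uv\,d\mu$; substituting this back into the displayed derivative reproduces (\ref{R7}).

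The only point needing care, and the main obstacle, is the justification that $\lambda(u,v,t)$ is genuinely a smooth function of $t$ with $\lambda(u,v,t_{1})=\lambda(t_{1})$, so that differentiation under the integral and the evaluation step are legitimate. This is handled exactly as in Proposition \ref{pro1p}: one fixes the eigenfunctions at $t_{1}$ and propagates them by the determinant rescaling $h(t),l(t)$ followed by the $B$-normalization, which keeps $u(t),v(t)$ smooth along the $(RH)_{\alpha}$ flow and maintains the integral identities. The additional orthogonality conditions $\int_{M}|u|^{\alpha}|v|^{\beta}v\,d\mu=0$ and $\int_{M}|u|^{\alpha}|v|^{\beta}u\,d\mu=0$ are preserved by this construction and serve only to single out the correct smooth eigenvalue branch; they do not otherwise enter the computation above.
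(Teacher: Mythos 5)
Your proposal is correct and follows essentially the same route as the paper: the paper gives no separate proof of Proposition \ref{pro1} but derives exactly this formula as equation (\ref{ppr}) inside the proof of Theorem \ref{tt1}, via the same three steps you use — differentiating $G$ under the integral with (\ref{ppr1}) and $\partial_t d\mu=-S\,d\mu$, testing the weak eigenvalue equations against $u'$ and $v'$ to get (\ref{pq4})--(\ref{pq5}), and eliminating the $u',v'$ terms through the differentiated normalization (\ref{R13}). Your closing remarks on smoothness and the evaluation $\lambda(u,v,t_1)=\lambda(t_1)$ likewise mirror the construction in Proposition \ref{pro1p}.
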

Now, we give a variation of $\lambda(t)$ under the normalized $(RH)_{\alpha}$  flow   which is similar to the previous Proposition.
\begin{proposition}\label{pro2}
Let  $(M^{m}, g(t),\phi(t))$ be  a solution of  the  normalized  $(RH)_{\alpha}$ flow
on the smooth  closed manifold $(M^{m}, g_{0},\phi_{0})$. If $\lambda(t)$ denotes the
evolution of the first eigenvalue under the normalized $(RH)_{\alpha}$ flow, then
\begin{eqnarray}\nonumber
\qquad\,\,\frac{d\lambda}{dt}(u,v,t)|_{t=t_{1}}
&=&\lambda(t_{1})\int_{M}S |u|^{\alpha}|v|^{\beta}uvd\mu+(\alpha+1)\int_{M}\mathcal{S}(\nabla u,\nabla u)|\nabla
u|^{p-2}d\mu\\\label{R16}
&&+(\beta+1)\int_{M}\mathcal{S}(\nabla v,\nabla v)|\nabla
v|^{q-2}d\mu-\frac{\beta+1}{q}\Large\int_{M}|\nabla v|^{q}S d\mu\\\nonumber
&&-\frac{\alpha+1}{p}\Large\int_{M}|\nabla u|^{p}S d\mu-\frac{\alpha+1}{m}r(t_{1})\Large\int_{M}|\nabla u|^{p} d\mu\\\nonumber
&&-\frac{\beta+1}{m}r(t_{1})\Large\int_{M}|\nabla v|^{q} d\mu.
\end{eqnarray}
where $(u,v)$ is the associated normalized evolving eigenfunctions.
\end{proposition}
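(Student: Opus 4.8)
The plan is to mirror the proof of Proposition \ref{pro1} while tracking the extra contribution of the normalization term $\frac{2}{m}rg$ in (\ref{f2}). First I would record how the three basic quantities evolve along the normalized flow. Since $\partial_t g_{ij}=-2\mathcal{S}_{ij}+\frac{2}{m}rg_{ij}$, one has $\partial_t g^{ij}=2g^{ik}g^{jl}\mathcal{S}_{kl}-\frac{2}{m}rg^{ij}$, so that for a smooth family $f=f(t)$,
\begin{equation*}
\frac{d}{dt}\left(|\nabla f|^{p}\right)=p|\nabla f|^{p-2}\left(\mathcal{S}(\nabla f,\nabla f)-\frac{r}{m}|\nabla f|^{2}+\langle\nabla f',\nabla f\rangle\right),
\end{equation*}
and, because $\mathrm{tr}_{g}(\partial_t g)=-2S+2r$, the volume form obeys $\partial_t d\mu=(-S+r)d\mu$. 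These are the only two places the normalization enters, and each differs from the unnormalized case exactly by an $r$-term.

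Next I would differentiate $\lambda(u,v,t)=\frac{\alpha+1}{p}\int_M|\nabla u|^p d\mu+\frac{\beta+1}{q}\int_M|\nabla v|^q d\mu$ by inserting the two formulas above. This reproduces the same $\mathcal{S}$- and $S$-terms appearing in (\ref{R7}), together with four new normalization contributions, namely $-\frac{(\alpha+1)r}{m}\int_M|\nabla u|^p d\mu+\frac{(\alpha+1)r}{p}\int_M|\nabla u|^p d\mu$ and the analogous pair for $v$. The inner-product integrals $\langle\nabla u',\nabla u\rangle$ and $\langle\nabla v',\nabla v\rangle$ are then converted via the eigenvalue identities (\ref{pg1}) and (\ref{pg2}) at $t=t_1$, exactly as in the unnormalized argument.

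The one genuinely new input is the time derivative of the constraint $\int_M|u|^\alpha|v|^\beta uv\,d\mu=1$: differentiating it along the normalized flow and using $\partial_t d\mu=(-S+r)d\mu$ now yields
\begin{equation*}
(\alpha+1)\int_M|u|^\alpha|v|^\beta u'v\,d\mu+(\beta+1)\int_M|u|^\alpha|v|^\beta uv'\,d\mu=\int_M S|u|^\alpha|v|^\beta uv\,d\mu-r,
\end{equation*}
so the relation (\ref{R13}) acquires the extra summand $-r$. Combined with the eigenvalue identities, the inner-product terms therefore contribute $\lambda(t_1)\int_M S|u|^\alpha|v|^\beta uv\,d\mu-\lambda(t_1)r$.

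Finally I would collect everything and use $\lambda(t_1)=\frac{\alpha+1}{p}\int_M|\nabla u|^p d\mu+\frac{\beta+1}{q}\int_M|\nabla v|^q d\mu$, which gives the identity
\begin{equation*}
-\lambda(t_1)r+\frac{(\alpha+1)r}{p}\int_M|\nabla u|^p d\mu+\frac{(\beta+1)r}{q}\int_M|\nabla v|^q d\mu=0 .
\end{equation*}
This exact cancellation wipes out the $1/p$ and $1/q$ normalization pieces and leaves only the $-\frac{\alpha+1}{m}r\int_M|\nabla u|^p d\mu$ and $-\frac{\beta+1}{m}r\int_M|\nabla v|^q d\mu$ terms, which is precisely (\ref{R16}). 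The main obstacle is purely the bookkeeping: every modification coming from $\frac{2}{m}rg$ spawns an $r$-term, and the statement only comes out clean because the $1/p,1/q$ contributions annihilate against the $\lambda(t_1)r$ contribution, so keeping the signs and the $1/m$ versus $1/p,1/q$ factors straight is the crux.
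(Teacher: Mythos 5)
Your proposal is correct and follows essentially the same route as the paper: the paper likewise computes $\partial_t d\mu=(r-S)d\mu$ and the $-\frac{2}{m}r|\nabla f|^2$ correction to $\frac{d}{dt}|\nabla f|^p$, differentiates the normalization constraint to get the extra $-r(t_1)$ summand, and converts the $\langle\nabla u',\nabla u\rangle$, $\langle\nabla v',\nabla v\rangle$ integrals via the eigenvalue identities. The cancellation you isolate explicitly, namely $-\lambda(t_1)r+\frac{(\alpha+1)r}{p}\int_M|\nabla u|^p d\mu+\frac{(\beta+1)r}{q}\int_M|\nabla v|^q d\mu=0$, is exactly what the paper performs implicitly in its final substitution step.
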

\begin{proof}
In the normalized case, derivative of the integrability condition $\int_{M}|u|^{\alpha}|v|^{\beta}uvd\mu=1$ respect to $t$, results  that
\begin{equation}\label{R17}
(\alpha+1)\int_{M} |u|^{\alpha}|v|^{\beta}u'vd\mu+(\beta+1)\int_{M} |u|^{\alpha}|v|^{\beta}uv'd\mu=- r(t_{1})+\int_{M}S |u|^{\alpha}|v|^{\beta}uvd\mu.
\end{equation}
On the other hand
\begin{equation}\label{R18}
\frac{d}{dt}(d\mu_{t})=\frac{1}{2}tr_{g}(\frac{\partial g}{\partial
t})d\mu=\frac{1}{2}tr_{g}(\frac{2}{m}rg-2\mathcal{S})d\mu=(r-S)d\mu
\end{equation}
hence we can then write
\begin{eqnarray}\label{R19}\nonumber
\frac{d\lambda}{dt}(u,v,t)|_{t=t_{1}}
&=&\frac{\alpha+1}{p}\left(  \frac{p}{2}\int_{M}\left\{-\frac{2}{m}r
|\nabla u|^{2}+2\mathcal{S}(\nabla u ,\nabla u)+2<\nabla u',\nabla
u>\right\}|\nabla u|^{p-2}d\mu\right)
 \\\nonumber
&&+\frac{\beta+1}{q}\left(     \frac{q}{2}\int_{M}\left\{-\frac{2}{m}r
|\nabla v|^{2}+2\mathcal{S}(\nabla v ,\nabla v)+2<\nabla v',\nabla
v>\right\}|\nabla v|^{q-2}d\mu   \right)\\\nonumber
&&+\frac{\alpha+1}{p} \int_{M}|\nabla u|^{p}(r-S)d\mu +\frac{\beta+1}{q}\int_{M}|\nabla v|^{q}(r-S)d\mu,
\end{eqnarray}
but
\begin{eqnarray}\nonumber
\qquad\,\,(\alpha+1)\int_{M}<\nabla u',\nabla u>|\nabla u|^{p-2}d\mu&+&(\beta+1)\int_{M}<\nabla v',\nabla v>|\nabla v|^{q-2}d\mu
\\\label{R20}&=&-\lambda(t_{1}) r(t_{1})+\lambda(t_{1})\int_{M}S |u|^{\alpha}|v|^{\beta}uvd\mu.
\end{eqnarray}
Therefore the proposition is obtained by replacing (\ref{R20}) in previous relation.
\end{proof}

\begin{theorem}\label{tttt}
Let  $(M^{m}, g(t), \phi(t))$ be  a solution of  the $(RH)_{\alpha}$ flow on the smooth  closed manifold $(M^{m}, g_{0}, \phi_{0})$ and  $\lambda(t)$ denotes the
evolution of the first  eigenvalue under the $(RH)_{\alpha}$ flow.  If $k=\min\{p,q\}$,
\begin{equation}\label{t5}
\mathcal{S}- \frac{S}{k}g> 0 \,\,\text{in} \,\, M^{m}\times [0,T)
\end{equation}
and $S_{\min}(0)>0$,
 then the quantity
$\lambda(t)(1-\frac{2}{m}S_{\min}(0)t)^{\frac{m}{2}}$ is nondecreasing along   the  $(RH)_{\alpha}$ flow on $[0,T')$, where $T':=\min\{\frac{m}{2S_{\min}(0)},T\}$.
\end{theorem}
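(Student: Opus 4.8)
The plan is to derive the pointwise differential inequality $\lambda'(t)\ge\lambda(t)\,y(t)$, where
\[
y(t)=\frac{S_{\min}(0)}{1-\frac{2}{m}S_{\min}(0)t}
\]
is the ODE barrier from (\ref{mp}), and then to integrate it in logarithmic form to read off the monotonicity of the weighted quantity. First I would fix an arbitrary $t_1\in[0,T')$, let $(u,v)=(u(t_1),v(t_1))$ be the normalized first eigenfunctions at $t_1$, and invoke Proposition \ref{pro1}, whose formula (\ref{R7}) records the value of $\frac{d\lambda}{dt}(u,v,t)\big|_{t=t_1}$.

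Exactly as in the passage from (\ref{ppr}) to (\ref{tpp3}) in the proof of Theorem \ref{tt1}, I would use the strict pinching hypothesis (\ref{t5}), namely $\mathcal{S}-\frac{S}{k}g>0$ with $k=\min\{p,q\}$, to bound the two gradient terms in (\ref{R7}) from below by $(\alpha+1)(\frac{1}{k}-\frac{1}{p})\int_M|\nabla u|^p S\,d\mu$ and $(\beta+1)(\frac{1}{k}-\frac{1}{q})\int_M|\nabla v|^q S\,d\mu$. Since $\frac{1}{k}-\frac{1}{p}\ge0$ and $\frac{1}{k}-\frac{1}{q}\ge0$ and (as shown below) $S>0$, these terms are nonnegative and may be discarded, leaving
\[
\frac{d\lambda}{dt}(u,v,t)\Big|_{t=t_1}\ \geq\ \lambda(t_1)\int_{M}S\,|u|^{\alpha}|v|^{\beta}uv\,d\mu .
\]

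Next I would estimate the remaining integral. Because $S_{\min}(0)>0$, applying the maximum principle to the inequality (\ref{ss1}) as in Theorem \ref{tt1} gives $S\ge y(t)>0$ on $[0,T')$. Taking the first eigenfunctions positive, the density $|u|^{\alpha}|v|^{\beta}uv$ is pointwise nonnegative, so with the normalization $\int_M|u|^{\alpha}|v|^{\beta}uv\,d\mu=1$ I obtain $\int_M S\,|u|^{\alpha}|v|^{\beta}uv\,d\mu\ge y(t_1)$, hence $\frac{d\lambda}{dt}(u,v,t)\big|_{t=t_1}\ge\lambda(t_1)\,y(t_1)$. Since the smooth comparison function $t\mapsto\lambda(u,v,t)$ lies above $\lambda(t)$ and touches it at $t_1$, the difference $\lambda(u,v,t)-\lambda(t)$ attains an interior minimum there; consequently, at every $t_1$ where $\lambda$ is differentiable (almost every $t_1$, by Theorem \ref{tt1}) one has $\lambda'(t_1)=\frac{d\lambda}{dt}(u,v,t)\big|_{t=t_1}$, giving $\lambda'(t_1)\ge\lambda(t_1)\,y(t_1)$ a.e.

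Finally, the opening Lemma in fact gives local Lipschitz continuity of $\lambda$ (its coefficient tends to $0$ with $\epsilon$), so $\lambda$ is absolutely continuous and positive, and I may integrate $\frac{d}{dt}\ln\lambda(t)\ge y(t)$ over $[t_0,t_1]$. Using
\[
\int_{t_0}^{t_1}y(\tau)\,d\tau=\frac{m}{2}\ln\frac{1-\frac{2}{m}S_{\min}(0)t_0}{1-\frac{2}{m}S_{\min}(0)t_1}
\]
and rearranging yields $\lambda(t_1)\left(1-\frac{2}{m}S_{\min}(0)t_1\right)^{\frac{m}{2}}\ge\lambda(t_0)\left(1-\frac{2}{m}S_{\min}(0)t_0\right)^{\frac{m}{2}}$, which is the claimed monotonicity. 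The main obstacle I anticipate is justifying that $\lambda'(t_1)$ equals (and not merely dominates) the comparison derivative, together with the assumption that the first eigenfunction density may be taken nonnegative; the former rests on the minimum argument for $\lambda(u,v,t)-\lambda(t)$ at $t_1$, while the latter relies on the positivity of first eigenfunctions of the $(p,q)$-system.
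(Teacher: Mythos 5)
Your proposal is correct and follows the same skeleton as the paper's proof: start from Proposition \ref{pro1} (formula (\ref{R7})), use the pinching hypothesis (\ref{t5}) exactly as in the passage to (\ref{tpp3}), invoke the ODE barrier (\ref{mp}) to keep $S>0$, and integrate the resulting logarithmic differential inequality to produce the weight $(1-\frac{2}{m}S_{\min}(0)t)^{\frac{m}{2}}$. The one place where you genuinely diverge is the endgame. The paper never touches $\lambda'(t)$ itself: it asserts the inequality $\frac{d}{dt}\lambda(u,v,t)\geq\lambda(u,v,t)\,y(t)$ for the \emph{smooth} comparison function on a small interval around $t_{1}$, integrates that, and then converts back to $\lambda(t)$ via $\lambda(u,v,t_{1})=\lambda(t_{1})$ and $\lambda(u,v,t_{0})\geq\lambda(t_{0})$, concluding local (hence global, $t_{1}$ being arbitrary) monotonicity of the weighted quantity. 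You instead upgrade the touching inequality to an a.e.\ identity $\lambda'(t_{1})=\frac{d}{dt}\lambda(u,v,t)\big|_{t=t_{1}}$ via the first-derivative test at the minimum of $\lambda(u,v,t)-\lambda(t)$, and then integrate $\frac{d}{dt}\ln\lambda\geq y$ using absolute continuity extracted from the opening Lemma. Both arguments work; yours demands the extra regularity input (local Lipschitzness of $\lambda$, which the Lemma does supply), while the paper's sidesteps it at the cost of a slightly informal ``holds in a small enough neighborhood'' continuity step. You also make explicit two points the paper leaves silent: that the discarded terms are nonnegative only because $S>0$ is preserved, and that the bound $\int_{M}S|u|^{\alpha}|v|^{\beta}uv\,d\mu\geq y(t_{1})$ requires the density $|u|^{\alpha}|v|^{\beta}uv$ to be pointwise nonnegative, i.e.\ that the first eigenfunctions of the system (\ref{pq1}) can be taken of one sign. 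Flagging that is a genuine improvement, since without it the normalization $\int_{M}|u|^{\alpha}|v|^{\beta}uv\,d\mu=1$ alone does not yield the lower bound.
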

\begin{proof}
According to (\ref{R7}) and (\ref{t5}) we have
\begin{eqnarray}\nonumber
\frac{d\lambda}{dt}(u,v,t)|_{t=t_{1}}
&>&\lambda(t_{1})\int_{M}S |u|^{\alpha}|v|^{\beta}uvd\mu+(\alpha+1)(\frac{1}{k}-\frac{1}{p})\int_{M}|\nabla
u|^{p}Sd\mu\\\label{t3}
&&+(\beta+1)(\frac{1}{k}-\frac{1}{q})\int_{M}|\nabla
v|^{q}Sd\mu.
\end{eqnarray}
If $S_{\min}(0)>0$, then (\ref{mp}) results that the positive of $S$ remains under the $(RH)_{\alpha}$ flow, therefore
\begin{equation}
\frac{d\lambda}{dt}(u,v,t)|_{t=t_{1}}\geq \lambda(t_{1})\frac{S_{\min}(0)}{1-\frac{2}{m}S_{\min}(0)t_{1}}.
\end{equation}
Then in any small enough neighborhood of $t_{1}$ as $I$, we get
\begin{equation}
\frac{d\lambda}{dt}(u,v,t)\geq \lambda(u,v,t)\frac{S_{\min}(0)}{1-\frac{2}{m}S_{\min}(0)t}.
\end{equation}
Integrating the last inequality with respect to $t$ on $[t_{0},t_{1}]\subset I$, we have
\begin{equation}
\ln \frac{\lambda(u(t_{1}),v(t_{1}),t_{1})}{\lambda(u(t_{0}),v(t_{0}),t_{0})}\geq \ln \left(\frac{1-\frac{2}{m}S_{\min}(0)t_{1}}{1-\frac{2}{m}S_{\min}(0)t_{0}} \right)^{-\frac{m}{2}}.
\end{equation}
Since $\lambda(u(t_{1}),v(t_{1}),t_{1})=\lambda(t_{1})$ and $\lambda(u(t_{0}),v(t_{0}),t_{0})\geq \lambda(t_{0})$ we conclude that
\begin{equation}
\ln \frac{\lambda(t_{1})}{\lambda(t_{0})}\geq \ln \left(\frac{1-\frac{2}{m}S_{\min}(0)t_{1}}{1-\frac{2}{m}S_{\min}(0)t_{0}} \right)^{-\frac{m}{2}},
\end{equation}
that is the quantity $\lambda(t)(1-\frac{2}{m}S_{\min}(0)t)^{\frac{m}{2}}$ is nondecreasing in any sufficiently small neighborhood of $t_{1}$. Since  $t_{1}$ is arbitrary, hence  $\lambda(t)(1-\frac{2}{m}S_{\min}(0)t)^{\frac{m}{2}}$  is nondecreasing along the $(RH)_{\alpha}$ flow on $[0,T')$.
\end{proof}
Now, if in  the $(RH)_{\alpha}$ flow, we suppose that $\alpha=0$, then  the $(RH)_{\alpha}$ flow reduce to the Ricci flow and we have the following corollary
\begin{corollary}
Let $g(t),\,\,\,\,\,t\in [0,T)$ be a solution of the Ricci flow on a closed Riemannain manifold $M$ and $\lambda(t)$ denotes the first eigenvalue of the $(p,q)$-Laplacian  (\ref{pq1}). Suppose that $k=\min\{p,q\}$ and $Ric-\frac{R}{k}g\geq 0$ along the Ricci flow.\\
$(1)$ If $R_{\min}(0)\geq0$, then $\lambda(t)$ is nondecreasing along the Ricci flow for any $t\in[0,T)$.\\
$(2)$ If $R_{\min}(0>0$, then the quantity  $(1-R_{\min}(0)t)\lambda(t)$ is nondecreasing along the Ricci flow for any $t\in[0,T')$ where $T'=\min\{T, \frac{1}{R_{\min}(0)}\}$.
\end{corollary}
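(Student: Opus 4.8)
The plan is to obtain both assertions as the $\alpha=0$ specialization of Theorems \ref{tt1} and \ref{tttt}, so that essentially no new analysis is required. First I would record what the flow and the auxiliary tensors become when $\alpha=0$. The coupling term $2\alpha\nabla\phi\otimes\nabla\phi$ vanishes and the map equation $\partial_t\phi=\tau_g\phi$ decouples from the metric equation; taking $\phi$ constant, a solution $(g(t),\phi(t))$ of \eqref{f1} is precisely a solution of the Ricci flow $\partial_t g=-2Ric$. Under this reduction the tensors from the preliminaries collapse to $\mathcal{S}=Ric-\alpha\nabla\phi\otimes\nabla\phi=Ric$ and $S=R-\alpha|\nabla\phi|^2=R$. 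Consequently the curvature hypothesis $\mathcal{S}-\frac1kSg\geq0$ becomes exactly $Ric-\frac{R}{k}g\geq0$, and $S_{\min}(0)$ becomes $R_{\min}(0)$.

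With these identifications in place, part $(1)$ is immediate: the hypotheses of Theorem \ref{tt1}, namely condition \eqref{t1} and $S_{\min}(0)\geq0$, hold verbatim with $\mathcal{S}=Ric$, $S=R$, $S_{\min}(0)=R_{\min}(0)\geq0$, and its conclusion is that $\lambda(t)$ is nondecreasing on $[0,T)$. For part $(2)$ I would invoke Theorem \ref{tttt} the same way: the strict condition \eqref{t5} becomes $Ric-\frac{R}{k}g>0$ and $S_{\min}(0)>0$ becomes $R_{\min}(0)>0$. The only step meriting attention is the scalar-curvature evolution that drives the Gr\"onwall argument. Under Ricci flow the harmonic-map contribution in $\partial_t S=\Delta S+2|\mathcal{S}_{ij}|^2+2\alpha|\tau_g\phi|^2$ drops out, leaving $\partial_t R=\Delta R+2|Ric|^2\geq\Delta R+\frac{2}{m}R^2$, which is exactly the specialization of \eqref{ss1}; the comparison ODE \eqref{mp} then gives $R\geq R_{\min}(0)/(1-\frac{2}{m}R_{\min}(0)t)$ and preserves positivity of $R$. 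Feeding this into the differential inequality of Theorem \ref{tttt} and integrating yields the asserted monotonicity.

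The main obstacle is purely bookkeeping of the exponent and of the blow-up time in the monotone quantity. Carrying the argument of Theorem \ref{tttt} through with $S=R$ produces $\lambda(t)\,(1-\tfrac{2}{m}R_{\min}(0)t)^{m/2}$ nondecreasing on $[0,\min\{T,\tfrac{m}{2R_{\min}(0)}\})$, coming from the integration $\int\frac{R_{\min}(0)}{1-\frac{2}{m}R_{\min}(0)s}\,ds=-\frac{m}{2}\ln\!\left(1-\frac{2}{m}R_{\min}(0)s\right)$. One must then reconcile this with the displayed factor $(1-R_{\min}(0)t)\lambda(t)$ and the interval $[0,\min\{T,1/R_{\min}(0)\})$: the two forms agree in the borderline dimension $m=2$, and in general I would retain the exponent $m/2$ and the threshold $\tfrac{m}{2R_{\min}(0)}$ explicitly. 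Beyond tracking this constant, no analytic input is needed past Theorems \ref{tt1} and \ref{tttt} and the maximum-principle estimate for $R$.
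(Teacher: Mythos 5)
Your proof is correct and is exactly the paper's (implicit) argument: the corollary is given without proof, being precisely the $\alpha=0$ specialization of Theorems \ref{tt1} and \ref{tttt}, with $\mathcal{S}=Ric$, $S=R$ and the scalar curvature evolution $\partial_t R=\Delta R+2|Ric|^2\geq\Delta R+\frac{2}{m}R^2$ replacing (\ref{ss1}). You are also right to flag the exponent in part $(2)$: Theorem \ref{tttt} actually yields that $\lambda(t)\bigl(1-\frac{2}{m}R_{\min}(0)t\bigr)^{m/2}$ is nondecreasing on $[0,\min\{T,\frac{m}{2R_{\min}(0)}\})$, so the printed factor $(1-R_{\min}(0)t)$ and threshold $1/R_{\min}(0)$ are correct only when $m=2$ (consistent with the paper's later surface corollary), and your corrected form is the one that follows for general $m$.
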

In dimension two we have
\begin{proposition}
Let $(g(t), \phi(t)),\,\,\,\,t\in [0,T)$ be a solution of the  $(RH)_{\alpha}$ flow on a closed Riemannian surface $M$ and  $\lambda(t)$ denotes the first eigenvalue of the $(p,q)$-Laplacian (\ref{pq1}). \\
$(1)$ Suppose that $Ric\geq \epsilon \nabla\phi\otimes  \nabla\phi$ where $\epsilon\geq 2\alpha\frac{k-1}{k-2}$ and $2\leq k=\min\{p,q\}$.\\
$(1-1)$   If $S_{\min}(0)\geq0$, then $\lambda(t)$ is nondecreasing along the $(RH)_{\alpha}$  for any $t\in[0,T)$.\\
$(1-2)$ If $S_{\min}(0>0$, then the quantity  $(1-S_{\min}(0)t)\lambda(t)$ is nondecreasing along the$(RH)_{\alpha}$  flow on $[0,T')$ where $T'=\min\{T, \frac{1}{S_{\min}(0)}\}$.\\
$(2)$ Suppose that $k=\min\{p,q\}$ and $|\nabla \phi|^{2}\geq k\nabla\phi\otimes\nabla\phi$. \\
$(2-1)$   If $S_{\min}(0)\geq0$, then $\lambda(t)$ is nondecreasing along the $(RH)_{\alpha}$  for any $t\in[0,T)$.\\
$(2-2)$ If $S_{\min}(0>0$, then the quantity  $(1-S_{\min}(0)t)\lambda(t)$ is nondecreasing along the$(RH)_{\alpha}$  flow on $[0,T')$ where $T'=\min\{T, \frac{1}{S_{\min}(0)}\}$.
\end{proposition}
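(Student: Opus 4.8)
The whole statement will follow once I verify, for each hypothesis set, the pointwise condition (\ref{t1}) (respectively the strict version (\ref{t5})), after which Theorem \ref{tt1} and Theorem \ref{tttt} apply verbatim with $m=2$. The plan therefore splits into a purely algebraic reduction on the surface and an appeal to the monotonicity theorems already proved.

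First I would exploit the two-dimensional identity $Ric_{ij}=\frac{R}{2}g_{ij}$. Substituting this together with $\mathcal{S}_{ij}=Ric_{ij}-\alpha\nabla_i\phi\nabla_j\phi$ and $S=R-\alpha|\nabla\phi|^2$ gives the key decomposition
\[
\mathcal{S}_{ij}-\frac{1}{k}S\,g_{ij}=\frac{k-2}{2k}\,R\,g_{ij}+\frac{\alpha}{k}\big(|\nabla\phi|^2 g_{ij}-k\,\nabla_i\phi\nabla_j\phi\big).
\]
Independently of any curvature hypothesis, the evolution inequality (\ref{ss1}) and the maximum-principle bound $S\geq y(t)$ coming from (\ref{mp}) show that $S_{\min}(0)\geq 0$ forces $S\geq 0$ (and $S_{\min}(0)>0$ forces $S>0$) on the whole time interval, whence $R=S+\alpha|\nabla\phi|^2\geq 0$. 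This is the fact I would establish before touching the tensor inequality, so that no circularity with Theorem \ref{tt1} arises.

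For part $(1)$ I would rewrite the hypothesis $Ric\geq\epsilon\nabla\phi\otimes\nabla\phi$ as $\nabla_i\phi\nabla_j\phi\leq\frac{R}{2\epsilon}g_{ij}$ (again using $Ric=\frac{R}{2}g$), discard the nonnegative term $\frac{\alpha}{k}|\nabla\phi|^2 g_{ij}$, and estimate
\[
\mathcal{S}_{ij}-\frac{1}{k}S\,g_{ij}\geq\Big(\frac{k-2}{2k}-\frac{\alpha}{2\epsilon}\Big)R\,g_{ij}.
\]
The bracket is nonnegative precisely when $\epsilon\geq\frac{k\alpha}{k-2}$, and the stated bound $\epsilon\geq 2\alpha\frac{k-1}{k-2}$ implies this because $2(k-1)\geq k$ for $k\geq 2$; combined with $R\geq 0$ this yields (\ref{t1}). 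For part $(2)$ the condition $|\nabla\phi|^2 g_{ij}\geq k\nabla_i\phi\nabla_j\phi$ makes the second bracket of the decomposition nonnegative outright, while $k\geq 2$ and $R\geq 0$ make the first term $\frac{k-2}{2k}R g_{ij}$ nonnegative, again giving (\ref{t1}). In each case the strict hypotheses ($S_{\min}(0)>0$ and strict curvature pinching) upgrade the inequality to the strict condition (\ref{t5}).

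With (\ref{t1}) (resp. (\ref{t5})) in hand, Theorem \ref{tt1} gives that $\lambda(t)$ is nondecreasing in the sub-cases $(1\text{-}1)$ and $(2\text{-}1)$, and Theorem \ref{tttt} gives monotonicity of $\lambda(t)(1-\frac{2}{m}S_{\min}(0)t)^{m/2}$; specializing $m=2$ turns this quantity into $(1-S_{\min}(0)t)\lambda(t)$ and the endpoint into $T'=\min\{\frac{1}{S_{\min}(0)},T\}$, which is exactly the assertion in $(1\text{-}2)$ and $(2\text{-}2)$. I expect the only genuinely delicate point to be the pointwise linear algebra: the mixed term $-\alpha\nabla_i\phi\nabla_j\phi$ is indefinite, so the reduction must be organized so that this indefinite part is either absorbed by the pinching hypothesis (part $1$) or controlled by the structural condition on $|\nabla\phi|^2$ (part $2$), and one must keep careful track of the sign of $k-2$ throughout.
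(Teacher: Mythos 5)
Your proposal is correct and follows the paper's route: the same two-dimensional decomposition of $\mathcal{S}-\frac{1}{k}Sg$ using $Ric_{ij}=\frac{R}{2}g_{ij}$, verification of the tensor condition (\ref{t1}) (resp.\ (\ref{t5})), and then a direct appeal to Theorems \ref{tt1} and \ref{tttt} with $m=2$. The differences are confined to the pointwise linear algebra. For part (1) the paper contracts with a vector $V$ and uses the Cauchy--Schwarz bound $(\nabla_{i}\phi V^{i})^{2}\leq|\nabla\phi|^{2}|V|^{2}$ together with $R\geq\epsilon|\nabla\phi|^{2}$, for which the threshold $\epsilon\geq 2\alpha\frac{k-1}{k-2}$ is exactly what makes the resulting coefficient nonnegative; you instead bound the rank-one term by $\frac{R}{2\epsilon}g$ and discard $\frac{\alpha}{k}|\nabla\phi|^{2}g$, which needs only $\epsilon\geq\frac{k\alpha}{k-2}$ --- a weaker sufficient condition for $k\geq 2$, so the stated hypothesis covers it. For part (2) your treatment is more careful than the paper's: after absorbing the indefinite term, the paper asserts that the remainder $R_{ij}V^{i}V^{j}-\frac{R}{k}|V|^{2}=(\frac{1}{2}-\frac{1}{k})R|V|^{2}$ equals zero, which is only true for $k=2$; you correctly note that one needs $k\geq 2$ and $R\geq 0$, and you supply $R=S+\alpha|\nabla\phi|^{2}\geq 0$ from the maximum-principle bound $S\geq y(t)$, a step the paper omits (and which is non-circular, as you observe). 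Two caveats apply equally to both arguments: part (2) of the statement does not actually hypothesize $k\geq 2$, so both proofs implicitly strengthen it; and in cases (1-2)/(2-2) the hypotheses only yield the non-strict inequality (\ref{t1}) rather than (\ref{t5}), but since the proof of Theorem \ref{tttt} only uses the non-strict version to reach $\frac{d\lambda}{dt}\geq\lambda\,S_{\min}(0)/(1-\frac{2}{m}S_{\min}(0)t)$, the conclusions stand.
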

\begin{proof}
In the case of surface, we have $R_{ij}=\frac{R}{2}$. Then
\begin{eqnarray*}
T_{ij}:=\mathcal{S}_{ij}-\frac{S}{k}g_{ij}&=&\frac{R}{2}g_{ij}-\alpha\nabla_{i}\phi\nabla_{j}\phi-\frac{1}{k}(R-\alpha|\nabla\phi|^{2})g_{ij}\\
&=&(\frac{1}{2}-\frac{1}{k})Rg_{ij}-\alpha\nabla_{i}\phi\nabla_{j}\phi+\frac{\alpha}{k}|\nabla\phi|^{2}g_{ij}.
\end{eqnarray*}
For any vector $V=(V^{i})$ we get
\begin{eqnarray*}
T_{ij}V^{i}V^{j}&=&(\frac{1}{2}-\frac{1}{k})R|V|^{2}-\alpha(\nabla_{i}\phi V^{i})^{2}+\frac{\alpha}{k}|\nabla\phi|^{2}|V|^{2}\\
&\geq&(\frac{1}{2}-\frac{1}{k})R|V|^{2}+\alpha(\frac{1}{k}-1)|\nabla\phi|^{2}|V|^{2}.
\end{eqnarray*}
If $Ric\geq \epsilon \nabla\phi\otimes  \nabla\phi$ where $\epsilon\geq 2\alpha\frac{k-1}{k-2}$  then $R\geq \epsilon |\nabla \phi|^{2}$ and
\begin{equation*}
T_{ij}V^{i}V^{j}\geq \left[(\frac{1}{2}-\frac{1}{k})\epsilon+\alpha(\frac{1}{k}-1)\right]|\nabla\phi|^{2}|V|^{2}\geq0.
\end{equation*}
For second case, we have
\begin{eqnarray*}
T_{ij}V^{i}V^{j}&=&R_{ij}V_{i}V^{j}-\alpha\nabla_{i}V^{i}\nabla_{j}V^{j}-\frac{R}{k}|V|^{2}+\frac{\alpha}{k}|\nabla \phi |^{2}|V|^{2}\\
&\geq& R_{ij}V^{i}V^{j}-\frac{\alpha}{k}|\nabla \phi |^{2}|V|^{2}-\frac{R}{k}|V|^{2}+\frac{\alpha}{k}|\nabla \phi |^{2}|V|^{2}=0.
\end{eqnarray*}
Hence the corresponding results follows by Theorems \ref{tt1} and \ref{tttt}.
\end{proof}
When we restrict  the  $(RH)_{\alpha}$ flow to the Ricci flow, we obtain
\begin{corollary}
Let $g(t),\,\,\,\,\,t\in [0,T)$ be a solution of the Ricci flow on a closed Riemannain surface $M$ and $\lambda(t)$ denotes the first eigenvalue of the $(p,q)$-Laplacian  (\ref{pq1}). \\
$(1)$ If $R_{\min}(0)\geq0$, then $\lambda(t)$ is nondecreasing along the Ricci flow for any $t\in[0,T)$.\\
$(2)$ If $R_{\min}(0>0$, then the quantity  $(1-R_{\min}(0)t)\lambda(t)$ is nondecreasing along the Ricci flow for any $t\in[0,T')$ where $T'=\min\{T, \frac{1}{R_{\min}(0)}\}$.
\end{corollary}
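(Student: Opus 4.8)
The plan is to obtain this corollary as the specialization $\alpha=0$ of the preceding surface proposition, and hence ultimately of Theorems \ref{tt1} and \ref{tttt}. When $\alpha=0$ the coupling term disappears from the flow (\ref{f1}), so $(RH)_{\alpha}$ degenerates to the Ricci flow $\partial_{t}g=-2Ric$; correspondingly the tensor $\mathcal{S}=Ric_{g}-\alpha\nabla\phi\otimes\nabla\phi$ reduces to $Ric_{g}$, the scalar $S=R-\alpha|\nabla\phi|^{2}$ reduces to $R$, and $S_{\min}(0)=R_{\min}(0)$. Thus the hypotheses of the two theorems are to be re-read with every $\phi$-dependent quantity deleted.

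The key verification is the curvature positivity condition $\mathcal{S}-\frac{1}{k}Sg\geq 0$ (resp. $>0$) demanded by Theorem \ref{tt1} (resp. \ref{tttt}). First I would use that on a surface the Ricci tensor is pure trace, $R_{ij}=\frac{R}{2}g_{ij}$, so that with $\alpha=0$,
\[
\mathcal{S}_{ij}-\frac{1}{k}Sg_{ij}=R_{ij}-\frac{R}{k}g_{ij}=\left(\frac{1}{2}-\frac{1}{k}\right)Rg_{ij}.
\]
Since $k=\min\{p,q\}\geq 2$ we have $\frac{1}{2}-\frac{1}{k}\geq 0$, so the sign of this tensor is governed entirely by the sign of $R$. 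Next I would recall that nonnegativity of $R$ is preserved under the Ricci flow: specializing (\ref{ss1}) to $\alpha=0$ and $m=2$ gives $\partial_{t}R\geq \Delta R+R^{2}$, and the maximum principle (exactly as in (\ref{mp})) yields $R\geq \frac{R_{\min}(0)}{1-R_{\min}(0)t}$. Hence $R_{\min}(0)\geq 0$ forces $R\geq 0$ for all $t$, so $\left(\frac{1}{2}-\frac{1}{k}\right)Rg\geq 0$ holds throughout the flow.

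With the curvature condition in hand, part $(1)$ follows immediately from Theorem \ref{tt1}: the inequality $R\geq 0$ together with the tensor inequality above is precisely the surface form of $S_{\min}(0)\geq 0$ and (\ref{t1}), whence $\lambda(t)$ is nondecreasing on $[0,T)$. For part $(2)$, assuming $R_{\min}(0)>0$ the same computation provides the strict condition (\ref{t5}) wherever $R>0$, and Theorem \ref{tttt} applies with $m=2$; the monotone quantity $\lambda(t)\left(1-\frac{2}{m}S_{\min}(0)t\right)^{\frac{m}{2}}$ collapses, for $m=2$, to $\lambda(t)\left(1-R_{\min}(0)t\right)$, while the endpoint $T'=\min\{T,\frac{m}{2S_{\min}(0)}\}$ becomes $\min\{T,\frac{1}{R_{\min}(0)}\}$, exactly as asserted.

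I do not expect a genuine obstacle here: the whole content is the algebraic reduction $R_{ij}=\frac{R}{2}g_{ij}$ in dimension two, which forces the two-tensor hypothesis to collapse into a scalar sign condition, plus the standard preservation of $R\geq 0$ under the surface Ricci flow. The only points demanding a little care are bookkeeping the exponent $\frac{m}{2}$ and the endpoint $T'$ when setting $m=2$, and observing that it is the standing assumption $k\geq 2$ that makes $\frac{1}{2}-\frac{1}{k}$ nonnegative; without it the reduction would produce the opposite sign and the argument would fail.
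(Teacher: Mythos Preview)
Your proposal is correct and matches the paper's approach exactly: the paper offers no separate argument for this corollary beyond the remark ``When we restrict the $(RH)_{\alpha}$ flow to the Ricci flow, we obtain\ldots'', and your write-up supplies precisely the intended specialization --- set $\alpha=0$, use the two-dimensional identity $R_{ij}=\tfrac{R}{2}g_{ij}$ to collapse the tensor hypothesis of Theorems~\ref{tt1} and~\ref{tttt} into a sign condition on $R$, and then invoke those theorems with $m=2$. Your flag that the argument tacitly needs $k=\min\{p,q\}\geq 2$ (so that $\tfrac12-\tfrac1k\geq 0$) is well taken; the paper imposes this in the surface proposition but omits it from the corollary's statement.
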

\begin{example}
Let $(M^{m}, g_{0})$ be an Einstein manifold i.e. there exists a constant a such that $Ric(g_{0})=ag_{0}$. Assume that $(N,\gamma)=(M,g_{0})$, then $\phi_{0}$ is the identity map. With the  assumption   $g(t)=c(t)g_{0}$,\,\,\,$c(0)=1$ and the fact that $\phi(t)=\phi(0)$  is harmonic map for all $g(t)$, the $(RH)_{\alpha}$ flow reduces to
\begin{equation*}
\frac{\partial c(t)}{\partial t}=-2a+2\alpha,\,\,\,\,\,\,c(0)=1,
\end{equation*}
then the solution of the initial value problem  is  given by
\begin{equation*}
c(t)=(-2a+2\alpha)t+1.
\end{equation*}
Therefore the solution of the $(RH)_{\alpha}$ flow remains Einstein and we have
\begin{eqnarray*}
\mathcal{S}&=&Ric_{g(t)}-\alpha\nabla\phi\otimes\nabla\phi=(a-\alpha)g_{0}=\frac{a-\alpha}{-2(a-\alpha)t+1}g(t),\\
S&=&R-\alpha|\nabla\phi|^{2}=\frac{am}{-2(a-\alpha)t+1}-\alpha\frac{m}{-2(a-\alpha)t+1}=\frac{(a-\alpha)m}{-2(a-\alpha)t+1}.
\end{eqnarray*}
Using  equation (\ref{R7}), we have
\begin{equation*}
\frac{d\lambda}{d t}(u,v,t)|_{t=t_{1}}=\frac{a-\alpha}{-2(a-\alpha)t+1}\left((\alpha+1)\int_{M}|\nabla u|^{p}d\mu+(\beta+1) \int_{M}|\nabla v|^{q}d\mu\right).
\end{equation*}
Now if assume that $p\leq q$ then for $\alpha<a$ and $t_{1}\in[0,T'')$ where $T''=\min\{\frac{1}{2(a-\alpha)}, T\}$, we have
\begin{equation*}
\frac{d\lambda}{d t}(u,v,t)|_{t=t_{1}}\geq \frac{a-\alpha}{-2(a-\alpha)t_{1}+1}\lambda(t_{1}).
\end{equation*}
This results that in any sufficiently small neighborhood of $t_{1}$  as $I_{1}$, we get
\begin{equation*}
\frac{d\lambda}{d t}(u,v,t)\geq \frac{a-\alpha}{-2(a-\alpha)t+1}\lambda(u,v,t).
\end{equation*}
Integrating  the last inequality with respect to $t$ on $[t_{0},t_{1}]\subset I_{1}$ we have
\begin{equation*}
\ln\frac{\lambda(u(t_{1}),v(t_{1}), t_{1})}{\lambda(u(t_{0}),v(t_{0}), t_{0})}\geq \ln\left( \frac{-2(a-\alpha)t_{1}+1}{-2(a-\alpha)t_{0}+1}\right)^{-\frac{p}{2}},
\end{equation*}
but $t_{1}\in[0,T'')$ is arbitrary, $\lambda(u(t_{1}),v(t_{1}),t_{1})=\lambda(t_{1})$ and $\lambda(u(t_{0}),v(t_{0}),t_{0})\geq \lambda(t_{0})$, then $\lambda(t)(-2(a-\alpha)t+1)^{\frac{p}{2}}$ is nondecreasing along the $(RH)_{\alpha}$ flow on $[0,T'')$.
\end{example}

\end{document}